\newtheorem{thm}{Theorem}
\newtheorem{lem}[thm]{Lemma}
\newtheorem{prop}[thm]{Proposition}
\theoremstyle{remark}
\newtheorem{rem}[thm]{Remark}
\newtheorem{example}[thm]{Example}
\newcommand{\Z}{{\mathbb{Z}}}
\newcommand{\Q}{{\mathbb{Q}}}
\newcommand{\G}{G}
\begin{document}
	
	\title[Hasse-Minkowski theorem for quadratic forms on groups]{Hasse-Minkowski theorem for quadratic forms on groups}
	
	\author{Stefan Bara\'{n}czuk} 
	\address{Faculty of Mathematics and Computer Science, Adam Mickiewicz University\\ul. Uniwersytetu Pozna\'{n}skiego 4, Pozna\'{n}, Poland}
	\email{stefbar@amu.edu.pl}
	
	\begin{abstract} 
		Consider groups such as  Mordell-Weil groups of abelian varieties over number fields, odd algebraic $K$-theory groups of number fields, or finitely generated subgroups of the multiplicative groups of number fields. They are all equipped with systems of reduction maps; thus, one can investigate the Hasse-Minkowski theorem for quadratic forms with coefficients in such groups. In this paper, we prove that the theorem holds for the forms whose rank equals $2$ or $3$, and we demonstrate that it does not hold for higher ranks by providing a counterexample. We also show that our results constitute a generalization of the classic Hasse-Minkowski theorem for binary and ternary integral forms.
	\end{abstract}

	\keywords{quadratic forms; Hasse-Minkowski theorem; Mordell-Weil groups; $K$-theory groups} 
	
	\subjclass[2020]{14G12; 14K15; 11R70; 11R04}

	\maketitle

	The Hasse-Minkowski theorem (cf. \cite{H}) states that a quadratic form over $\mathbb{Q}$ represents $0$ if and only if it represents $0$ in $\mathbb{Q}_{v}$ for every place $v$ on $\mathbb{Q}$, including $\infty$. For quadratic forms of the rank equal to $2$ or $3$, the following stronger result is known.
	
	\begin{lem}\hspace{1cm}\label{2and3}
		\begin{enumerate}[label=\emph{(\alph*)}]
			\item \label{P1}
			An integral quadratic form 
			\[a x^2 + b y^2\]
			represents $0$ if and only if it represents $0$ modulo all but finitely many prime numbers.  
			\item \label{P2}
			An integral quadratic form
			\[a x^2 + b y^2 + c z^2\]
			represents $0$ if and only if it represents $0$ modulo every $l^{k}$, where $l$ is a prime number and $k$ is a nonnegative integer.
		\end{enumerate}
	\end{lem}
	Since quadratic forms can be defined on any module over a commutative ring, it is a natural question whether we can generalize Lemma \ref{2and3} to modules equipped with an appropriate system of reduction maps. In this paper, we establish such a generalization to groups satisfying the following simple abstract axiomatic setup.
	
	Let $\G$ be an abelian group whose torsion subgroup $\G_{\mathrm{tors}}$ has finite exponent. Let $r_{v} \colon \G \to \G_{v}$ be a family of group homomorphisms indexed by a set of all but finitely many primes $v$ in a number field $\mathbb{K}$. The targets $\G_{v}$ are finite abelian groups. We  use the following notation:\\
	
	\begin{tabular}{ll}
		$P \bmod v$	&	denotes $r_{v}(P)$ for $P \in \G$\\
		$P=Q \bmod v$ & means  $r_{v}(P)=r_{v}(Q)$ for $P,Q \in \G$\\
		$\mathrm{ord} \ T$ & the order of a torsion point $T \in \G$ \\
		$\mathrm{ord}_{v} P$ &  the order of a point $P \bmod v$ \\
		$l^{k} \parallel n$	& means that $l^{k}$ exactly divides $n$, i.e. $l^{k} \mid  n$		and	$l^{k+1} \nmid n$, \\ & where  $l$ is a prime number, $k$ a positive integer, \\ & and $n$ a natural number.				
	\end{tabular}\\
	\\
	Our main technical assumption imposed on the family $r_{v} \colon \G \to \G_{v}$ is the following.  
	\begin{enumerate}[label=A{\arabic*}]
		\item \label{o posylaniu} Let $l$ be a prime number. Let $(k_{1}, \ldots, k_{m})$ be a sequence of nonnegative integers. If $P_{1}, \ldots, P_{m} \in \G$ are linearly independent over $\Z$ then there is a  set of primes $v$ in $\mathbb{K}$ of positive density such that 
		\[l^{k_{i}} \parallel \mathrm{ord}_{v} P_{i} \text{ if } k_{i}>0,\]
		and
		\[l \nmid \mathrm{ord}_{v} P_{i} \text{ if } k_{i}=0.\]
	\end{enumerate}
	Examples of groups known to satisfy the above axioms are the following (see e.g. \cite{Bar2}). 
	\begin{itemize}	
		\item $A(\mathbb{K})$, Mordell-Weil groups of abelian varieties over number fields $\mathbb{K}$ with $\mathrm{End}_{\bar{\mathbb{K}}} (A) = \mathbb{Z}$,
		\item $K_{2n+1}(\mathbb{K})$, $n>0$,  odd algebraic $K$-theory groups of number fields,
		\item finitely generated subgroups of the multiplicative groups of number fields.
	\end{itemize}
	
	The main results of this paper are the following two theorems.
	\begin{thm}\label{thm for 2}
		Let $P, Q\in \G$ be points of infinite order. The following are equivalent:
		\begin{itemize}
			\item For almost every $v$ there exist coprime integers $x,y$ and a point $T \in \G_{\mathrm{tors}}$ such that
			\begin{equation}\label{zal dla 2}
				x^2 P + y^2 Q  = T \mod v.
			\end{equation}
			\item
			There exist coprime integers $x,y$ and a point $T \in \G_{\mathrm{tors}}$ such that
			\begin{equation}\label{teza dla 2}
				x^2 P + y^2 Q  = T.
			\end{equation}
		\end{itemize}
	\end{thm}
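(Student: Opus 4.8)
Since applying $r_v$ to \eqref{teza dla 2} gives \eqref{zal dla 2}, the plan is to prove the converse, and the first step is to fix the exponent $e$ of $B_{\mathrm{tors}}$ — finite by Assumption \ref{o torsyjnych}, since each $B_v$ is — so that $e\cdot r_v(T)=0$ for every $T\in B_{\mathrm{tors}}$ and every $v$. Then I would split into two cases according to whether $P,Q$ are linearly independent over $\Z$.

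Suppose first that $P,Q$ are linearly independent. I claim \eqref{zal dla 2} then fails on an infinite set of $v$, so the implication holds vacuously (and \eqref{teza dla 2} has no solution either: it would give $e(x^2P+y^2Q)=0$, forcing $x=y=0$). To prove the claim, fix a prime $\ell\nmid e$ and apply Assumption \ref{o posylaniu} to $(P,Q)$ with the skew exponents $(k_1,k_2)=(1,2)$: there is a positive density — hence infinite — set of $v$ with $\ell\parallel\mathrm{ord}_v P$ and $\ell^2\parallel\mathrm{ord}_v Q$. For such a $v$, projecting \eqref{zal dla 2} onto the $\ell$-primary part of $B_v$ annihilates $T$ and leaves $x^2p+y^2q=0$, where $p,q$ are the $\ell$-parts of $r_v(P),r_v(Q)$, of orders $\ell$ and $\ell^2$. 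A short computation — distinguishing whether $p\in\langle q\rangle$, so that $\langle p,q\rangle$ is cyclic of order $\ell^2$, or $p\notin\langle q\rangle$, so that $\langle p,q\rangle\cong\Z/\ell\oplus\Z/\ell^2$ — shows in both cases that no $x,y$ with $\gcd(x,y)=1$ can satisfy it. (Here the choice of exponents $(1,2)$ rather than $(1,1)$ is essential: with equal exponents one cannot exclude $p,q$ being proportional, and then $\ell\mid cx^2+y^2$ may be primitively solvable.)

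Now suppose $P,Q$ are linearly dependent: there are coprime nonzero integers $a,b$ with $T_0:=aP+bQ\in B_{\mathrm{tors}}$. Choosing $s,t$ with $sa+tb=1$ and putting $R:=tP-sQ$, one checks that $P=bR+sT_0$ and $Q=-aR+tT_0$, with $R$ of infinite order, whence
\[
x^2P+y^2Q=(bx^2-ay^2)R+(sx^2+ty^2)T_0\qquad\text{for all }x,y.
\]
From this I would read off that, for almost all $v$ (those with $B_{\mathrm{tors}}\hookrightarrow B_v$), condition \eqref{zal dla 2} holds if and only if there are coprime $x,y$ with $(n_v/g_v)\mid(bx^2-ay^2)$, where $n_v=\mathrm{ord}_v R$ and $g_v$ is the order of $\langle r_v(R)\rangle\cap r_v(B_{\mathrm{tors}})$, so that $g_v\mid e$. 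Next, for any prime $\ell\nmid 2abe$, Assumption \ref{o posylaniu} applied to $R$ produces infinitely many $v$ with $\ell\parallel n_v$, hence $\ell\mid n_v/g_v$; almost all such $v$ are good and satisfy \eqref{zal dla 2}, so for one of them there are coprime $x,y$ with $\ell\mid bx^2-ay^2$, and since $\ell\nmid ab$ this forces $\ell\nmid xy$ and $ab\equiv(ayx^{-1})^2\pmod{\ell}$ — that is, $ab$ is a square modulo $\ell$. Thus the binary form $bX^2-aY^2$ is isotropic over $\Q_\ell$ for all but finitely many $\ell$. Invoking the Hasse--Minkowski theorem in the omitted-places form of Proposition \ref{2and3} (for this binary form, equivalently: a non-square integer is a non-square in $\Q_\ell$ for a positive proportion of $\ell$), I conclude that $ab$ is a perfect square. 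Since $\gcd(a,b)=1$ this means $|a|=u^2$, $|b|=w^2$ with $a,b$ of the same sign; then $x=u,y=w$ are coprime with $bx^2-ay^2=0$, so by the displayed identity $x^2P+y^2Q=(sx^2+ty^2)T_0\in B_{\mathrm{tors}}$, which is \eqref{teza dla 2}.

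The step I expect to be the real obstacle is the one made in the dependent case: licensing the passage from \emph{almost every place $v$ of $F$} to \emph{almost every rational prime $\ell$}. It is here that both assumptions are indispensable — Assumption \ref{o torsyjnych} to make the torsion contribution vanish uniformly on $\ell$-primary parts (so that \eqref{zal dla 2} genuinely becomes a divisibility by $n_v/g_v$ with $\ell\nmid g_v$), and Assumption \ref{o posylaniu} to guarantee that enough prescribed $\ell$-power orders of $R$ are actually realized by the reduction maps — and only once both are in place does the local hypothesis supply precisely the input that Proposition \ref{2and3} requires. The independent case, by contrast, is comparatively soft once the exponents $(1,2)$ are chosen.
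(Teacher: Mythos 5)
Your proof is correct and follows essentially the same strategy as the paper: first rule out linear independence of $P,Q$ by using Assumption \ref{o posylaniu} to prescribe distinct $\ell$-adic valuations for $\mathrm{ord}_v P$ and $\mathrm{ord}_v Q$ (you use exponents $(1,2)$ where the paper uses $(2,3)$ — both work), then in the dependent case reduce to the binary form $bx^2-ay^2$ representing $0$ modulo almost every prime and invoke Proposition \ref{2and3}\ref{P1}. Your extra bookkeeping with $R$, $T_0$ and $g_v$ is a more explicit version of the paper's shortcut of multiplying \eqref{zal dla 2} by $b$ and choosing $l$ coprime to $\#B_{\mathrm{tors}}$, but the substance is identical.
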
	
	
	\begin{thm}\label{thm for 3} Let $P, Q, R\in \G$ be points of infinite order. The following are equivalent:
		
		\begin{itemize}
			\item For almost every $v$ there exist integers $x,y,z$ with $\gcd(x,y,z)=1$ and a point $T \in \G_{\mathrm{tors}}$ such that
			\begin{equation}\label{zal}
				x^2 P + y^2 Q + z^2 R = T \mod v.
			\end{equation}
			\item
			There exist integers $x,y,z$ with $\gcd(x,y,z)=1$ and a point $T \in \G_{\mathrm{tors}}$ such that
			\begin{equation}\label{teza}
				x^2 P + y^2 Q + z^2 R = T.
			\end{equation}
		\end{itemize}
	\end{thm}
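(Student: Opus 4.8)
The plan is to follow the template of the proof of Theorem~\ref{thm for 2}. The implication \eqref{teza}$\Rightarrow$\eqref{zal} is immediate: apply $r_v$ to \eqref{teza}, and it then holds for every $v$. For the converse, as in the rank~$2$ case I would first rule out the possibility that $P,Q,R$ are linearly independent, and then treat the dependent case, further split according as the rank of $\langle P,Q,R\rangle<B$ modulo torsion is $2$ or $1$ (it is $\geq1$ since $P,Q,R$ have infinite order).

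\emph{Ruling out rank $3$.} Fix a prime $l$ coprime to $\#B_{\mathrm{tors}}$ and invoke Assumption~\ref{o posylaniu} to get a positive density set of $v$ at which $\mathrm{ord}_v P,\mathrm{ord}_v Q,\mathrm{ord}_v R$ have prescribed $l$-adic valuations, and — crucially — at which the $l$-parts of $r_v(P),r_v(Q),r_v(R)$ generate as large a subgroup of $B_v$ as possible, namely a free $\Z/l^{N}$-module of rank $3$ (this is what \cite{Bar1}, Theorem~5.1 actually provides, the bare order statement recorded in Assumption~\ref{o posylaniu} being only a consequence). On such a $v$, reducing \eqref{zal} into the $l$-primary part of $B_v$ — where $T$ contributes nothing by Assumption~\ref{o torsyjnych} — gives $x^2 P+y^2 Q+z^2 R\equiv0$, forcing $l^{N}\mid x^2,y^2,z^2$, hence $l\mid\gcd(x,y,z)$, contrary to $\gcd(x,y,z)=1$. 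I expect this to be the main obstacle: the clean ``disjoint valuations'' device of the rank~$2$ proof does not survive the passage to three variables, since the sets of attainable values of $k_i-2v_l(\cdot)$ cannot be made pairwise disjoint, so one is forced to control the joint reduction of $P,Q,R$ and not merely their individual orders.

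\emph{The case of rank $2$.} Relabel so that $\gamma R=\alpha P+\beta Q$ for integers $\gamma\neq0,\alpha,\beta$, with $P,Q$ independent. Multiplying \eqref{zal} by $\gamma$ and substituting gives $(\gamma x^2+\alpha z^2)P+(\gamma y^2+\beta z^2)Q\equiv\gamma T\pmod v$. Running the rank~$2$ argument for the independent pair $P,Q$ with $l^{2}\parallel\mathrm{ord}_v P$ and $l^{3}\parallel\mathrm{ord}_v Q$ forces, for all but finitely many $l$, that $l$ divides both $\gamma x^2+\alpha z^2$ and $\gamma y^2+\beta z^2$ for the relevant $x,y,z$; since $\gcd(x,y,z)=1$ this also forces $l\nmid z$, so each of the binary forms $\gamma X^2+\alpha Z^2$ and $\gamma Y^2+\beta Z^2$ acquires a primitive zero modulo $l$. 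By Proposition~\ref{2and3}\,\emph{\ref{P1}} each represents $0$ over $\Q$, i.e.\ $-\alpha/\gamma$ and $-\beta/\gamma$ are squares; a primitive integer triple $(x,y,z)$ in the direction $\bigl(\sqrt{-\alpha/\gamma}:\sqrt{-\beta/\gamma}:1\bigr)$ then annihilates both forms, whence $\gamma(x^2 P+y^2 Q+z^2 R)=0$ and \eqref{teza} holds with this $T$.

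\emph{The case of rank $1$.} Write $dP=aP_0$, $dQ=bP_0$, $dR=cP_0$ with $d\neq0$, so that $d(x^2 P+y^2 Q+z^2 R)=F(x,y,z)\,P_0$ for the ternary form $F=aX^2+bY^2+cZ^2$ (non-degenerate, since $a,b,c$ are all nonzero as $P,Q,R$ have infinite order). Applying Assumption~\ref{o posylaniu} to $P_0$ for \emph{every} prime $l$, including $l=2$, and every exponent $k$, and using that $\bigl|\langle r_v(P_0)\rangle\cap r_v(B_{\mathrm{tors}})\bigr|$ is bounded independently of $v$ by Assumption~\ref{o torsyjnych}, one deduces that $F$ has a primitive zero modulo $l^{k}$ for every prime power $l^{k}$, hence is isotropic over $\Q_l$ for every finite place. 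By Proposition~\ref{2and3} in the ternary case — where only one place, here $\infty$, may be omitted, the real place being then forced by Hilbert reciprocity — $F$ represents $0$ primitively over $\Q$, and $d(x^2 P+y^2 Q+z^2 R)=0$ yields \eqref{teza}. The point to flag here is that a single genuine ternary form appears, which is exactly why the argument must, and does, recover the $2$-adic behaviour; this is also what makes the naive ``omit finitely many places'' version of Hasse--Minkowski insufficient.
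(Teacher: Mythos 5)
Your skeleton is right (rule out independence, then split by the rank of the relation lattice, finishing with Proposition \ref{2and3}), and your rank-$1$ endgame --- a single ternary form $aX^2+bY^2+cZ^2$ acting on a generator, primitive zeros modulo every $l^k$, isotropy at every finite place, and Proposition \ref{2and3} \emph{\ref{P2}} with only $\infty$ omitted --- is essentially the paper's argument. But two of your three steps have genuine gaps. To rule out linear independence you invoke a strengthened form of Assumption \ref{o posylaniu} (that the $l$-parts of $r_v(P),r_v(Q),r_v(R)$ generate a free $(\Z/l^N)$-module of rank $3$), which is not among the paper's axioms, and you assert that the bare order statement cannot suffice. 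That assertion is false: the paper stays inside the stated framework by applying Assumption \ref{o posylaniu} to the independent triple $P+Q,\,Q,\,Q+R$ with $l=2$ and exponents $1+e,\,3+e,\,4+e$ (where $2^{e}$ is the maximal power of $2$ dividing $\#B_{\mathrm{tors}}$), deducing $2^{3+e}\parallel\mathrm{ord}_v P$ and $2^{4+e}\parallel\mathrm{ord}_v R$, and then exploiting the identity $x^2P+y^2Q=y^2(P+Q)+(x^2-y^2)P$ together with $8\mid(x^2-y^2)$ for odd $x,y$; a short parity analysis on $(x,y,z)$ with $\gcd(x,y,z)=1$ then contradicts \eqref{zal}. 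Since the whole point of the axiomatic setup is to argue only from Assumptions \ref{o posylaniu} and \ref{o torsyjnych}, importing an unproved strengthening is a real gap, not a stylistic choice.

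The rank-$2$ step also fails as written. With $l^{2}\parallel\mathrm{ord}_vP$ and $l^{3}\parallel\mathrm{ord}_vQ$, the congruence $(\gamma x^2+\alpha z^2)P+(\gamma y^2+\beta z^2)Q=\gamma T\bmod v$ only forces the two $l$-parts to have \emph{equal} order; because the coefficients are no longer perfect squares their valuations are not confined to disjoint parity classes, so for instance $v_l(\gamma x^2+\alpha z^2)=0$ and $v_l(\gamma y^2+\beta z^2)=1$ both give order $l^{2}$ and are perfectly consistent with cancellation --- you cannot conclude that $l$ divides both coefficients. The paper avoids this by choosing the unbalanced valuations $l^{2}\parallel\mathrm{ord}_vQ$, $l\parallel\mathrm{ord}_vR$ as in \eqref{PQorders}, which forces $l$ to divide the coefficient of $Q$ outright (otherwise that term strictly dominates), then uses $\gcd(x,y,z)=1$ to exclude $l\mid x$, $l\mid y$ before reading off that $b/a$ is a square modulo $l$ and invoking Proposition \ref{2and3} \emph{\ref{P1}}. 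Your step is repairable (run the argument twice with the two exponents swapped), but as stated the key divisibility does not follow. Note also that the paper treats separately the sub-case where a pair such as $\{P,Q\}$ is already dependent, where a quadratic-nonresidue choice of $l$ is needed to show the configuration collapses to rank $1$ unless $-s/t$ is a square; your uniform rank-$2$ treatment absorbs this sub-case only by way of the invalid deduction above.
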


	Theorems \ref{thm for 2} and \ref{thm for 3} cannot be extended directly to quadratic forms of higher ranks, even if we 
assume that they represent zero for all $v$. Indeed, we have the following.	
\begin{prop}\label{higher}
	Fix $n \ge 4$. Let $P \in \G$ be a point of infinite order. Define the form 
	\[	F(x_{1}, \ldots , x_{n})=( 2 x_{1}^2  + x_{2}^2  +x_{3}^2   +x_{4}^2  + \ldots + x_{n}^2)P.\]
	Then  $F(x_{1}, \ldots , x_{n})\in \G_{\mathrm{tors}}$ if and  only if $(x_{1}, \ldots , x_{n})=(0,\ldots, 0)$. However,
	for every $v$  there exist integers $x_{1}, \ldots , x_{n}$ with $\gcd(x_{1}, \ldots , x_{n})=1$ such that 
	\begin{equation}\label{gt 4 mod v}
		F(x_{1}, \ldots , x_{n}) = 0 \mod v.
	\end{equation}
\end{prop}

\begin{rem}
Our choice of the axiomatic setup for $G$ was dictated by its simplicity to apply to Mordell-Weil or $K$-theory groups. However, we could have replaced the set of primes in number fields as the indexing set by more general, but rather cumbersome setup of infinite sets for which the notion of density is defined. In particular, this would cover the case $\G=\Z$ for which Theorems \ref{thm for 2} and \ref{thm for 3} become Lemma \ref{2and3}, hence justifying our claim that they are its generalization.

Let us demonstrate how Theorem \ref{thm for 2} becomes Lemma \ref{2and3}\emph{\ref{P1}}. First, using Hensel's lifting lemma and Proposition \ref{lifting}, we can replace the local condition in Lemma \ref{2and3}\emph{\ref{P1}} by only apparently stronger  \textit{modulo all natural numbers not divisible by elements of a finite set of prime numbers}. The setup for which Theorem \ref{thm for 2} becomes the reformulated Lemma \ref{2and3}\emph{\ref{P1}} is as follows.
Let $S$ be a finite set of prime numbers,  $\mathcal{V}$ be the set of natural numbers not divisible by the elements in $S$, and for every $v \in \mathcal{V}$ the homomorphism $r_{v} \colon \Z \to \Z/v$ be the usual reduction modulo $v$. Analyzing the proof of Theorem  \ref{thm for 2}, we see that in fact a weaker version of Assumption \ref{o posylaniu} is needed; it has to be satisfied only for all but finitely many prime numbers $l$. This holds  
trivially. Indeed, $\G$ is cyclic, so any set of independent points in $G$ consists of just one point. Let $P\in \Z \setminus \left\lbrace 0 \right\rbrace $. Fix a prime number $l \notin S$, and a nonnegative integer $k$. Let $l^e$ be the maximal power of $l$ dividing $P$. For any natural number $m$ coprime to $l$, to $P$, and to all elements of $S$, define $v=l^{k+e}m \in \mathcal{V}$. The set of such $v$'s has positive density, and we have $\mathrm{ord}_{v} P=l^{k}m$, which means that  $l^k \parallel \mathrm{ord}_{v} P$ if $k$ is positive, and $l \nmid \mathrm{ord}_{v} P$ if $k=0$.   

Taking $S=\emptyset$, we obtain a setup for which Theorem  \ref{thm for 3} is equivalent to Lemma \ref{2and3}\emph{\ref{P2}}; the reasoning is similar.
\end{rem}

\begin{rem}
In Theorems \ref{thm for 2} and \ref{thm for 3}, the phrase \textit{for almost every $v$} means \textit{for all but a set of density zero}. However, our proofs remain valid, if we impose the following three changes simultaneously:
	\begin{itemize}
		\item $r_{v} \colon \G \to \G_{v}$ is a family of group homomorphisms indexed by an infinite set $\mathcal{V}$,
		\item In Assumption \ref{o posylaniu}, the phrase \textit{there is a  set of primes $v$ in $\mathbb{K}$ of positive density} is replaced by \textit{for infinitely many indices  $v \in \mathcal{V}$},
		\item In Theorems \ref{thm for 2} and \ref{thm for 3}, the phrase \textit{for almost every $v$} means \textit{for all but finitely many $v \in \mathcal{V}$}.
	\end{itemize}    
\end{rem}
	
	\begin{rem}
	Considering Theorems \ref{thm for 2} and \ref{thm for 3}, one could wonder if the following principle holds: if a fixed torsion point is represented locally, then it is represented globally. To be precise,
	let $F$ be a quadratic form with coefficients in $\G$, of rank equal to $2$ or $3$. Let $T \in \G_{\mathrm{tors}}$. Does the assumption
	\begin{center}
		\textit{$F$ represents $T$ modulo almost every $v$}	
	\end{center}
	implies
	\begin{center}
		\textit{$F$ represents $T$?}	
	\end{center}
	This is not the case in general, as we demonstrate in  
	Example \ref{example} at the end of the paper.    
	\end{rem}

	\begin{rem}
	  In contrast to classic quadratic forms, i.e., over fields, the quadratic forms over groups in general cannot be diagonalized. This means, that our results does not seem to determine whether the local-global principle holds for nondiagonal forms over $\G$, such as 
	\[x^2 P + xyQ + y^2 R.\]    
	We leave this an open question, since we are neither able to prove such a principle, nor provide a counterexample, even though we could in fact use a much stronger version of Assumption \ref{o posylaniu}, where various prime numbers $l$ can be dealt with simultaneously; see Theorems 2.1.,  A.1., and A.3. in \cite{BanBar}.   
	\end{rem}

	The tools we use in the proofs are (sometimes very delicate) applications of Assumption \ref{o posylaniu}, Lemma \ref{2and3} itself, and variations on the following elementary fact. Let $P$ and $Q$ be elements of finite order in an abelian group, and $l$ be a prime number. If
	\[l^k \parallel \mathrm{ord} \ P \, \text{ and } l^k \nmid \mathrm{ord} \ Q \]
	then
	\[l^k \parallel \mathrm{ord} \ (P +Q).\]

	\begin{proof}[Proof of Lemma \ref{2and3}] 
		By Proposition 1.4., p. 105 in \cite{Neu}, an integral quadratic form  represents $0$ in $\mathbb{Q}_{l}$ if and only if it represents $0$ modulo $l^{k}$, for every positive integer $k$. Thus, \emph{\ref{P2}} is equivalent to  Corollary 3 on p. 43 in \cite{S}. \emph{\ref{P1}} follows from the theorem formulated in Remark 1) on p. 43 in op. cit., and Hensel's lifting lemma. 
	\end{proof}

	\begin{proof}[Proof of Theorem \ref{thm for 2}]
		Suppose that $P$ and $Q$ are linearly independent. Fix a prime number $l$, coprime to $\mathrm{exp}(\G_{\mathrm{tors}})$. By Assumption \ref{o posylaniu}, there is a positive density set of primes $v$ such that 
		\[
		\begin{array}{ccc}
			l^{2} \parallel \mathrm{ord}_{v}P & \mathrm{and} & l^{3} \parallel \mathrm{ord}_{v} Q.
		\end{array}
		\]
		If both $x$ and $y$ are coprime to $l$, then
		\[
		\begin{array}{ccc}
			l^{2} \parallel \mathrm{ord}_{v}(x^2 P) & \mathrm{and} & l^{3} \parallel \mathrm{ord}_{v} (y^2 Q),
		\end{array}
		\]
		so
		\[l^{3} \parallel \mathrm{ord}_{v} (x^2 P+y^2 Q).\]
		If $l \mid x$ and $l \nmid y$, then
		\[
		\begin{array}{ccc}
			l \nmid \mathrm{ord}_{v}(x^2 P) & \mathrm{and} & l^{3} \parallel \mathrm{ord}_{v} (y^2 Q),
		\end{array}
		\]
		so
		\[l^{3} \parallel \mathrm{ord}_{v} (x^2 P+y^2 Q).\]
		If $l \nmid x$ and $l \mid y$, then
		\[
		\begin{array}{ccc}
			l^{2} \parallel \mathrm{ord}_{v}(x^2 P) & \mathrm{and} & l^{2} \nmid \mathrm{ord}_{v} (y^2 Q),
		\end{array}
		\]
		so
		\[l^{2} \parallel \mathrm{ord}_{v} (x^2 P+y^2 Q).\]
		In all cases we get a contradiction to \eqref{zal dla 2}. Hence, $P$ and $Q$  are linearly dependent, i.e., 
		\begin{equation}\label{aPbQ0}
			aP+bQ=0 
		\end{equation}
		for some nonzero integers $a$ and $b$. Multiplying \eqref{zal dla 2} by $b$, and subtracting \eqref{aPbQ0} multiplied by $y^2$, we get
		\begin{equation}\label{tylko P}
			(b x^2 - a y^2)P = b T \mod v.
		\end{equation}
		Fix an arbitrary prime number $l$, coprime to $\mathrm{exp}(\G_{\mathrm{tors}})$. By Assumption \ref{o posylaniu}, there is a positive density set of primes $v$ such that 
		\[l \mid \mathrm{ord}_{v}P.\]
		Therefore, according to \eqref{tylko P}, there are coprime integers $x,y$ such that
		\[l \mid (b x^2 - a y^2).\]
		Thus, by Lemma \ref{2and3} \emph{\ref{P1}}, the form $b x^2 - a y^2$ represents $0$, i.e., there are coprime  integers $x,y$ such that
		\begin{equation}\label{bx2ay20}
			b x^2 = a y^2.
		\end{equation}
		Multiplying \eqref{aPbQ0} by $y^2$, and applying \eqref{bx2ay20} we get
		\[b(x^2 P + y^2 Q)  = 0.\]
		This asserts \eqref{teza dla 2} with $T$ being a torsion point of order dividing $b$.

	\end{proof}

	\begin{proof}[Proof of Theorem \ref{thm for 3}]
		Suppose that $P,Q$ and $R$ are linearly independent. This means that the points 
		\[P+Q, Q, Q+R\] 
		are independent. Let $2^{e}$ be the maximal power of $2$ dividing $\mathrm{exp}(\G_{\mathrm{tors}})$. By Assumption \ref{o posylaniu}, there is a positive density set of primes $v$ such that \begin{equation}\label{ord3 1}
			\begin{array}{ccccc}
				2^{1+e} \parallel \mathrm{ord}_{v} (P+Q) & , & 2^{3+e} \parallel \mathrm{ord}_{v} Q & ,  & 2^{4+e} \parallel \mathrm{ord}_{v} (Q+R). 
			\end{array}
		\end{equation}
		This gives
		\begin{equation}\label{ord3 2}
			\begin{array}{ccccc}
				2^{3+e} \parallel \mathrm{ord}_{v} P & , & 2^{3+e} \parallel \mathrm{ord}_{v} Q & ,  & 2^{4+e} \parallel \mathrm{ord}_{v} R. 
			\end{array}
		\end{equation}
		Now, by \eqref{zal}, we have $2 \mid z$. Thus,
		\begin{equation}\label{ord3 4}
			\begin{array}{cccc}
				\mathrm{either} & 2^{1+e} \nmid \mathrm{ord}_{v} (z^{2} R)& \mathrm{or} & 2^{2+e} \parallel \mathrm{ord}_{v} (z^2 R),
			\end{array}
		\end{equation}
		so, respectively
		\begin{equation}\label{ord3 3}
			\begin{array}{cccc}
				\mathrm{either} & 2^{1+e} \nmid \mathrm{ord}_{v} (x^{2} P + y^2 Q)& \mathrm{or} & 2^{2+e} \parallel \mathrm{ord}_{v} (x^2 P + y^2 Q).
			\end{array}
		\end{equation}
		Since $\gcd(x,y,z)=1$, we have to consider two cases: when both $x$ and $y$ are odd, and when exactly one of $x$ and $y$ is even.  
		
		If both $x$ and $y$ are odd, then $8 \mid (x^2-y^2)$, so we get by \eqref{ord3 1} and \eqref{ord3 2} that 
		\[ \begin{array}{ccc}
			2^{1+e} \parallel \mathrm{ord}_{v} (y^2 (P+Q)) & \mathrm{and} & 2^{1+e} \nmid \mathrm{ord}_{v} ((x^2-y^2) P).
		\end{array}\] 
		Since $x^2 P + y^2 Q = y^2 (P+Q) + (x^2-y^2) P$, we have 
		\[2^{1+e} \parallel \mathrm{ord}_{v} (x^2 P + y^2 Q),\] but that contradicts \eqref{ord3 3}.   
		
		If $x$ is odd and $y$ is even, then we get by \eqref{ord3 2} that
		\[\begin{array}{ccc}
			2^{3+e} \parallel \mathrm{ord}_{v} (x^2 P) & \mathrm{and} & 2^{3+e} \nmid \mathrm{ord}_{v} (y^2 Q), 
		\end{array}\]
		but that together with \eqref{ord3 4} contradicts \eqref{zal}. If $y$ is odd and $x$ is even, then we argue in the same manner.
		
		In all cases, $P,Q$ and $R$ are linearly dependent.\\
		
		First, we analyze the case in which among the pairs $\left\lbrace P, Q \right\rbrace $, $\left\lbrace P, R \right\rbrace $ and  $\left\lbrace Q, R \right\rbrace $, one is a pair of linearly dependent points. Without loss of generality, we can assume that this pair is $\left\lbrace P, Q \right\rbrace $, i.e., there are nonzero rational integers $s,t$ such that
		\begin{equation}\label{sPtQ}
			s P = t Q .
		\end{equation}
		
		If $-s/t$ is the square of a rational number, then we are done. Indeed, write
		\[-s/t=(p/q)^2\] with coprime $p,q$. Together with \eqref{sPtQ}, it gives
		\[t (p^2 P+ q^2 Q)=0.\]
		This asserts \eqref{teza} with $(x,y,z)=(p,q,0)$, and $T$ being  a torsion point of order dividing $t$.
		
		So suppose that $-s/t$ is not a square. Lemma \ref{2and3} \emph{\ref{P1}} asserts that there are infinitely many prime numbers $l$ such that $-s/t$ is a quadratic nonresidue modulo $l$. Fix one such $l$, coprime to both $s,t$, and to $\mathrm{exp}(\G_{\mathrm{tors}})$. 
		Multiplying \eqref{zal} by $s$, and using \eqref{sPtQ} we get  
		\begin{equation}\label{zal s}
			(t x^2  + s y^2 ) Q +  s z^2 R = s T \mod v .
		\end{equation}		
		Suppose that $Q$ and $R$ are linearly independent. By Assumption \ref{o posylaniu},  there is a positive density set of primes $v$ such that 
		\[\begin{array}{ccc}l^2 \parallel \mathrm{ord}_{v} Q & \mathrm{and} & l^3 \parallel \mathrm{ord}_{v} R.\end{array}\]
		This, together with \eqref{zal s}, gives us the following chain of implications. First, we get that
		\[l \mid z.\] This means that 
		\[\begin{array}{cccc}
			\mathrm{either}& l \parallel \mathrm{ord}_{v} (s z^2 R)& \mathrm{or}& l \nmid \mathrm{ord}_{v} (s z^2 R),
		\end{array}\]
		so 
		\[ l^2 \nmid \mathrm{ord}_{v} ((t x^2  + s y^2 ) Q).
		\]
		Thus
		\[l \mid (t x^2  + s y^2 ). \] 
		However, $\gcd(x,y,z)=1$. Hence, both $x$ and $y$ are coprime to $l$, so we have
		\[(x/y)^2=-s/t \mod l ,\]
		contrary to our assumption that 
		$-s/t$ is nonresidue. Thus $Q$ and $R$ are linearly dependent, i.e.,
		\begin{equation}\label{uQwR}
			uQ=wR
		\end{equation}
		for some nonzero rational integers $u,w$. Multiplying \eqref{zal s} by $w$, and using \eqref{uQwR} multiplied by $s z^2$, we get  
		\begin{equation}\label{zal s w}
			( w t x^2  + w s y^2 +  u s z^2) Q  = w s T \mod v .
		\end{equation}
		Denote by $e$ the maximum of the exponents occurring in the prime decomposition of $\mathrm{exp}(\G_{\mathrm{tors}})$.
		Fix an arbitrary prime number $l$, and a positive integer $k$. By Assumption \ref{o posylaniu}, there is a positive density set of primes $v$ such that \[l^{k+e} \parallel \mathrm{ord}_{v} Q .\]
		By \eqref{zal s w}, we get that
		\[l^{k} \mid ( w t x^2  + w s y^2 +  u s z^2) .\]
		Thus, by Lemma \ref{2and3} \emph{\ref{P2}}, the quadratic form $( w t x^2  + w s y^2 +  u s z^2)$ represents 0, i.e., there are integers $x,y,z$, with $\gcd(x,y,z)=1$ such that
		\[
		w t x^2  + w s y^2 +  u s z^2 = 0.\]
		This, together with \eqref{sPtQ} and \eqref{uQwR}, gives 
		\[0=(w t x^2  + w s y^2 +  u s z^2) Q = ws(x^2 P + y^2 Q + z^2 R).\]
		This asserts \eqref{teza} with $T$ being a torsion point of order dividing $ws$.\\

		Now, we turn to the case in which among the pairs $\left\lbrace P, Q \right\rbrace $, $\left\lbrace P, R \right\rbrace $ and $\left\lbrace Q, R \right\rbrace $, none is a pair of linearly dependent points. Write
		\begin{equation}\label{lin dep no pair}
			aP+bQ+cR=0
		\end{equation}
		with $a,b,c \ne 0$. Multiplying \eqref{zal} by  $a$ and \eqref{lin dep no pair} by $x^2$ and subtracting the results, we get
		\begin{equation}\label{lin dep QR}
			(ay^2-bx^2)Q+(az^2-cx^2)R=a T \mod v.
		\end{equation}
		Let $l$ be a prime number,  coprime to the numbers $a,b,c,\mathrm{exp}(\G_{\mathrm{tors}})$. By Assumption \ref{o posylaniu}, there is a positive density set of primes $v$ such that 
		\begin{equation}\label{PQorders}
			\begin{array}{ccc}
				l^{2} \parallel \mathrm{ord}_{v}Q & \mathrm{and} & l \parallel \mathrm{ord}_{v} R.
			\end{array}
		\end{equation}
		By \eqref{lin dep QR}, we get
		\begin{equation}\label{abxy}
			l \mid (ay^2-bx^2) .
		\end{equation} 
		Thus, if $l$ divides either $x$ or $y$, then it divides them both. Assume that this is the case. However, this implies by \eqref{PQorders} and \eqref{lin dep QR}, that 
		\[l \nmid \mathrm{ord}_{v}(az^2R).\]
		Hence, by \eqref{PQorders},  we have $l\mid z$. This contradicts our assumption that $\gcd(x,y,z)=1$. Thus, $l$ divides neither $x$ nor $y$, so \eqref{abxy} means that $b/a$ is a square modulo $l$. 
		
		The only restriction on $l$ in the above argument is that it cannot divide any of the numbers $a,b,c$, and $\mathrm{exp}(\G_{\mathrm{tors}})$. Hence, it  is valid for all but finitely many prime numbers $l$, so Lemma \ref{2and3} \emph{\ref{P1}}  asserts that $b/a$ is the square of a rational number. Similarly, we prove the same for $c/a$. 
		
		Thus, we can set $b/a=(t/s)^2$ with coprime $t,s$, and $c/a=(u/w)^2$  with coprime $u,w$, so by \eqref{lin dep no pair} we get
		\[a(sw)^2P+a(tw)^2Q+a(su)^2R=0.\] 
		This asserts \eqref{teza} with $x=sw/\gcd(w,s),y=tw/\gcd(w,s),z=su/\gcd(w,s)$, and  $T$ being a torsion point of order dividing $a (\gcd(w,s))^2$.\\
		
	\end{proof}

	\begin{proof}[Proof of Proposition \ref{higher}] The form \[ 2 x_{1}^2  + x_{2}^2  +x_{3}^2   +x_{4}^2  + \ldots + x_{n}^2\] is positive, and point $P$ is of infinite order, so \[( 2 x_{1}^2  + x_{2}^2  +x_{3}^2   +x_{4}^2  + \ldots + x_{n}^2)P = 0\] if and  only if $(x_{1}, \ldots , x_{n})=(0,\ldots, 0)$.
		
		Let $k$ be the order of $P\bmod v$. This is an easy  corollary of the Gauss theorem  on sums of three squares, that $2k$ is of the form $2 a^2  + b^2  +c^2 +1$, where $a,b$ and $c$ are integers (cf. \cite{Sie}, page 395, exercise 4). Thus, in order to get \eqref{gt 4 mod v}, we  set $x_{1}=a, x_{2}=b, x_{3}=c, x_{4}=1$, and $x_{i}=0$ for $i > 4$.
	\end{proof}
	

	\begin{example}\label{example}
		Suppose that there is a nontorsion point $P\in \G$, and points $T_{1}, T_{2} \in \G_{\mathrm{tors}}$ such that $\mathrm{ord} \ T_{1} = 5$ and $\mathrm{ord} \ T_{2} = 2$. Consider the form
		\[F(x,y)=x^2 (P+T_{1}) - y^2 (P+T_{2}).\]
		The unique torsion point represented by $F$, i.e., equal to $F(x,y)$ for coprime $x,y$, is 
		\[F(1,1)=T_{1}-T_{2}.\]
		However, modulo every $v$, $F$ represents the point 
		\[4T_{1}-T_{2},\]
		which is different from $F(1,1)$, provided the restriction of $r_{v}$ to $\G_{\mathrm{tors}}$ is injective.
		Indeed, define
		\[n=2 \ \mathrm{ord}_{v} P,\]
		and factorize
		\[n=2^e m,\]
		where $m$ is an odd number.
		
		Let $k$ be a solution to the following system of congruences
		\[\left\{ \begin{array}{l}
			3k+2 \equiv 0 \bmod 2^e \\
			4k+3 \equiv 0 \bmod m .
		\end{array} \right.
		\]
		Define
		\[\begin{array}{l}
			x=10k+7, \\
			y=2k+1 .
		\end{array} 
		\]
		We have
		\[\gcd(x,y)=\gcd(x-5y,y)=\gcd(2,2k+1)=1,\]
		and 
		\[x^2-y^2=(x+y)(x-y)=(12k+8)(8k+6)=8(3k+2)(4k+3) \equiv 0 \bmod n.\]
		Thus, we get the claimed representation
		\[F(x,y)=(x^2-y^2) P+ x^2  T_{1} - y^2  T_{2} \equiv 4 T_{1} -   T_{2} \bmod v. \]
		
		We conclude by providing  concrete examples of $G, P, T_{1}$ and $T_{2}$. Consider the elliptic curve $E/\Q$ 
		\[y^2+xy=x^3-454955x+118072977.\] 
		According to the $L$-functions and modular forms database (LMFDB), this curve (LMFDB label 6270.r2) has the Mordell-Weil group $E(\Q)$ isomorphic to $\Z \oplus \Z/10$. Let $G_{1}$ and $G_{2}$ be its infinite order generator, and torsion generator, respectively. Take $\G=E(\Q)$, $P=G_{1}$, $T_{1}=2 G_{2}$, $T_{2}=5 G_{2}$. 
		
		Recall that if $E/\mathbb{K}$ is an elliptic curve over a number field, then for all but finitely many primes of good reduction $v$, the restriction of the corresponding reduction map to $E(\mathbb{K})_{\mathrm{tors}}$ is injective.	
	\end{example}

\begin{prop}\label{lifting}
	Let $f$ be an integral quadratic form, and $u,v>1$ be coprime natural numbers. Suppose there exist sequences of integers  $x_{1},\ldots,x_{n}$ and $y_{1},\ldots,y_{n}$  such that $\gcd(x_{1},\ldots,x_{n})=\gcd(y_{1},\ldots,y_{n})=1$ and 
	\begin{equation}\label{ChRT for u and v}
		\left\lbrace \begin{array}{l}
			f(x_{1},\ldots,x_{n})\equiv 0 \bmod u\\
			f(y_{1},\ldots,y_{n})\equiv 0 \bmod v.
		\end{array}\right.  
	\end{equation}
	Then there exists a sequence of  integers $z_{1},\ldots,z_{n}$ such that $\gcd(z_{1},\ldots,z_{n})=1$ and
	\begin{equation}\label{ChRT for uv}
			f(z_{1},\ldots,z_{n})\equiv 0 \bmod uv .  
	\end{equation}
\end{prop}
\begin{proof}
	Since $u$ and $v$ are coprime, there exist integers $s$ and $t$ such that
	\begin{equation}\label{xeucl for u v}
		su+tv=1.
	\end{equation}
	For every $i \in \left\lbrace 1, \ldots, n\right\rbrace $ define
	\begin{equation}\label{def of zeta}
		\zeta_{i}=suy_{i}+tvx_{i}.
	\end{equation}
By \eqref{ChRT for u and v} we get
\begin{equation}\label{ChRT zeta for uv}
	f(\zeta_{1},\ldots,\zeta_{n})\equiv 0 \bmod uv .  
\end{equation}
	Let $p$ be a prime number dividing $\gcd(\zeta_{1},\ldots,\zeta_{n})$. Suppose that $p \mid u$. By \eqref{def of zeta} and \eqref{xeucl for u v} we get that $p \mid x_{i}$ for every $i \in \left\lbrace 1, \ldots, n\right\rbrace $, contrary to the assumption that $\gcd(x_{1},\ldots,x_{n})=1$. Thus $\gcd(\zeta_{1},\ldots,\zeta_{n})$ and $u$ are coprime. Analogously, we get the same for $v$. Hence $\gcd(\zeta_{1},\ldots,\zeta_{n})$ is invertible modulo $uv$. Dividing \eqref{ChRT zeta for uv} by the square of $\gcd(\zeta_{1},\ldots,\zeta_{n})$ and putting
	\[z_{i}=\zeta_{i}/\gcd(\zeta_{1},\ldots,\zeta_{n})\]
	we get \eqref{ChRT for uv}.   
\end{proof}

	\section*{Acknowledgements}
	We are grateful to Grzegorz Banaszak for the discussions.


	\bibliographystyle{plain}

\end{document}